\documentclass[12pt]{article}
\author{Anirban Das}
\title{Coupling two Markov Chains}

\usepackage{hyperref} 
\usepackage{makeidx} 
\usepackage[]{algorithm2e}
\usepackage{hyperref}
\usepackage{bbm}
\usepackage{amssymb}
\usepackage[english]{babel}
\usepackage[autostyle]{csquotes}
\usepackage{amsmath,amsfonts,amssymb,amsthm,epsfig,epstopdf,titling,url,array}
\usepackage{enumerate}
\makeindex
\usepackage[hypcap]{caption}
\usepackage[left=2cm,right=2cm,top=3cm,bottom=4cm]{geometry} 
\parindent=1cm 

\theoremstyle{plain}
\newtheorem{thm}{Theorem}[section]

\newtheorem*{remark}{Remark}
\newtheorem*{remarks}{Remarks}

\theoremstyle{definition}

\theoremstyle{remark}

\def\diy{\displaystyle}


\begin{document}
\def\cY{{\mathcal Y}}
\date{}
\title
{Constructions of  Markov processes in random environments which lead to a product form of the stationary measure}
\author{Anirban Das, \\
Department of Mathematics, Pennsylvania State University
}

\maketitle

\begin{abstract}{Recently Belopolskaya and Suhov (2015) studied Markov processes in a random environment, where the environment changes in a Markovian manner. They introduced constructions allowing the process to \enquote{interact with an environment}. This was done in such a manner that the combined process has the product of the stationary measures of the individual processes as it's stationary measure . In this paper, a new construction is implemented, related to a product form for the stationary measure. This construction can be carried out with almost no conditions. However it requires the use of an additional state, denoted by $c$. The extent to which the combined process uses state $c$ indicates how far this process is from naturally having a product form for the stationary measure. To specify various aspects of the construction, we use an example from queuing theory, which is studied in detail. We observe how our construction works in this example, especially how the combined process uses state $c$. Physical interpretations lead to a refined construction also administered on a queuing theory background. In the refined construction the use of state $c$ agrees with an intuition.  }
\end{abstract}

{\tiny AMS Classification (2010):  60J27 (Primary), 	60J28 (Secondary) .  }
{\tiny Key words: continuous time Markov processes, Markov processes in random environments, invariant measures, product-formula, queuing theory, neural avalanches.}

\begin{section}{Introduction}\label{Introduction}
This paper concerns with dynamics of Markov chains in random environments. There
exists a substantial literature (see, e.g., Ref. \cite{zeitounilecture} and the bibliography therein) where such processes are considered in an environment that is randomly chosen but kept fixed throughout the time dynamics. In Ref. 
\cite{economou2005generalized}, a particular construction was proposed where the environment influences the basic process, but remains unaffected by it, resulting in a product-form of a stationary distribution. In \cite{belopolskaya2015models} a `combined' Markov process has been introduced, with basic and environment processes
influencing each other and the product-form of the stationary distribution still preserved. In the current paper we give a different construction of Markov models similar to \cite{belopolskaya2015models}. Our construction can be applied under more general conditions but with a added proviso: 
we have to affiliate an additional state for the combined process. 

\par Consider sets $Z= \{ z_1,z_2, \ldots z_M\}$ and  $X= \{ x_1,x_2, \ldots x_N\}$. We call $Z$ the environment space, and  $X$ the collection of basic states. We also are given a family of linear dissipative $N \times N$ matrices $\{Q^{z} \in \mathbbm{C}_{b}(X): z \in Z\}$  indexed by $z \in Z$, with entries  $Q^{z}(x_j|x_i)$ specifying the jump intensity from $x_i$ to $x_j$. Dissipativity means $Q^{z}(x_j|x_i)= (-1)^{\mathbbm{1}_{j}(i) }|Q^{z}(x_j|x_i)|$, and  $\sum\limits_{x' \in X} Q^{z}(x'|x) =0$. Thus, for a fixed $z \in Z$, matrix $Q^{z}$ yields the generator of a continuous-time Markov chain with values in $X$. It describes the dynamics of the basic state when the environment is fixed (see \cite{bremaud2013markov}, also \cite{ethier2009markov}). We call it a basic 
Markov chain or a basic process.\\

 We assume that the basic  process has a stationary measure $m^z(x), \; x \in X$, depending on $z \in Z$, with $m^z(x)> 0, \; \forall \; x,\; z$. Formally, $\forall \; x' \in X$
\begin{equation}{\label{1_1}}
\sum\limits_{x \in X} Q^{z}(x'|x)m^z (x)=0 \mbox{.}
\end{equation}

Next suppose we have $\{A^{x} \in \mathbbm{C}_{b}(Z): x \in X\}$ a  family of $M \times M$ dissipative matrices indexed by $x \in X$, with elements $A^{x}(z_j|z_i)$ representing  jump rates from $z_i$ to $z_j$. As earlier, we have  $A^{x}(z_j|z_i)= (-1)^{\mathbbm{1}_{j}(i) }|A^{x}(z_j|z_i)|$, and $\sum\limits_{z' \in Z} A^{x}(z'|z) =0$. Thus we have a family of Markov chains taking values in $Z$ and describing a random evolution of the environment for a fixed basic state. We suppose that each of these processes has a stationary measure $\nu^{x}(z) > 0$. That is, 
\begin{equation}{\label{1_2}}
\sum\limits_{z \in Z} A^{x}(z'|z) \nu^{x}(z)=0 \mbox{.}
\end{equation}
\par With these at hand, we would like to construct a continuous-time Markov chain on $X \times Z$ (referred to as a combined Markov chain or process) in a meaningful way, so that both the state and the environment can change together, while we still can use a product-measures as stationary measure for the combined process. This means the combined chain will have $g(x,z) = m^z(x) \times \nu^{x}(z)$ a stationary measure. Thus we are working with a random environment and a random basic process, interacting with each other. An 
advantage of preserving the product-form for the invariant measure has been discussed in \cite{gannon2014random},  while studying Jackson networks (see \cite{jackson1957networks}). Ref \cite{economou2005generalized} also puts forward a  significance of having a product-form for the stationary distribution of the combined process. 

\par Constructions leading to the above properties have been carried out in \cite{belopolskaya2015models}. These constructions are based upon certain assumptions. First, Ref. \cite{belopolskaya2015models} assumes that $\nu^{x}(z)$ is the same for all $x$. Next, it is assumed that the combined process can jump from state $(x,z)$ to a $(x',z')$ only when either $x=x'$ or $z=z'$. The construction presented here will not have these restrictions. However, as was said, we have to introduce a `transitional' state $c$, representing an additional level the combined process may attain. We show for any $\epsilon >0$ we can construct a combined process so that that $g(c)= \epsilon$.  By assuming that $m$ and $\nu$ are strictly positive we ensure $g$ is also positive valued.

\par Section 2.1 outlines general features of our construction. Section 2.2 gives a specific construction with a minimality property. Section 3 concerns with studying how this construction works out for a specific example. Section 3.1 describes an example from queuing theory, 3.2 analyses it in detail. Section 4.1 is dedicated to a refined construction developing the example in Section 3. Section 4.2 analyses the combined process for this refined construction. In the beginning of Section 4 we sketch a physical intuition behind the theorems of Section 4.2.
\end{section}

\begin{section}{A construction with a single transitional state}
\begin{subsection}{A general result}\label{general_construction}
Let $X$, $Z$ , $A^{x}$ and $ Q^{z}$ be as in the introduction. Define the state space $\cY= \{X \times Z\} \cup \{ c \}$. We call $X \times Z$ a natural space, and $c$ a transition state. 
Given $\epsilon >0$, define the positive function $g$ as
\begin{eqnarray}\label{eq:invmesg}
g(p) &=& m^z(x) \times \nu^{x}(z) \nonumber\\ 
     &\;&\quad\mbox{ when $p = (x,z)$, i.e., $p$ is a part of the natural space} \nonumber \\
 &=& \epsilon\;, \; \; \mbox{when $p=c$}.
\end{eqnarray}
Given a function $(x,z) \in X \times Z \mapsto \tau(x,z)$ define a matrix $R$ with entries $R (x',z'|x,z)$
\begin{equation}{\label{2_1}}
R (x',z'|x,z)= (-\tau (x,z))^{\mathbbm{1}_{z}(z') \mathbbm{1}_{x}(x')}\left| A^{x}(z'|z)Q^{z}(x'|x)\right|\mbox{.}
\end{equation} 
Observe  that (\ref{2_1}) has been defined so as  to ensure $R(x',z'|x,z)$ is non-positive only when $(x,z)=(x',z')$.
In what follows we  choose $\tau(x,z)= \tau_{\epsilon} (x,z)$ and entries $R_{\epsilon} (x',z'|c)$, $R_{\epsilon} (c|x,z)\geq 0$, and $R_{\epsilon} (c|c)\leq 0$, such that the following relations (A), (B) hold true. 
\begin{enumerate}[(A)]{\label{items_2_1}}
\item $R_{\epsilon}$ is dissipative, and $\forall \; p \in \cY$, $\sum\limits_{s \in \cY} R_{\epsilon}(s|p)=0$ (we henceforth write $R_{\epsilon}(x',z'|x,z)$, to stress the importance of the value of $\epsilon$ ). 
\item 
The function $g$ from (\ref{eq:invmesg}) yields a stationary measure for the Markov chain generated by $R_{\epsilon}$ , i.e. \newline $\sum\limits_{p \in \cY} R_{\epsilon}(s|p)g(p)=0$.
\end{enumerate}

The above would ensure (by Hille -Yosida theorems) that  $R_{\epsilon}$ generates a Markov Process on $\cY$, which has a stationary measure $g$.
The ingredients of the construction are as follows:

\begin{enumerate}
\item Given $(x_i,z_j) \in X \times Z$, choose $\tau_{\epsilon}(x_i,z_j) >0$ large enough such that 
\begin{equation}{\label{choice1}}
\sum\limits_{(x,z) \in X \times Z} R_{\epsilon}(x_i,z_j|x,z) g(x,z) \le 0
\end{equation}
and
\begin{equation}{\label{choice2}}
\sum\limits_{(x',z') \in X \times Z} R_{\epsilon}(x',z'|x_i,z_j) \le 0.
\end{equation}

\paragraph*{} Such a choice of $\tau_{\epsilon}(x_i,z_j)$ is possible because $A^{x_i}(z_j|z_j) <0$, $Q^{z_j}(x_i|x_i) <0$ and $g(x_i,z_j) > 0$, $\forall i,j$.

\item Next, we can choose $R_{\epsilon} (x_i,z_j|c) \ge 0$, and $R_{\epsilon} (c|x_i,z_j) \ge 0$, such that 
\begin{eqnarray}{\label{3_1}}
\sum\limits_{(x,z) \in X \times Z} R_{\epsilon}(x_i,z_j|x,z) g(x,z)   + R_{\epsilon} (x_i,z_j|c) \epsilon = 0 
\end{eqnarray}
and
\begin{equation}{\label{3_2}}
\sum\limits_{(x',z') \in X \times Z} R_{\epsilon}(x',z'|x_i,z_j) + R_{\epsilon} (c|x_i,z_j) = 0.
\end{equation}
\item After that, we choose $R_{\epsilon} (c|c) \le 0$, such that
 \begin{equation}{\label{3_3}}
\sum\limits_{(x,z) \in X \times Z} R_{\epsilon} (c|x,z) g(x,z) + R_{\epsilon} (c|c) \epsilon = 0 \mbox{.}
\end{equation}
\item One may show 
 \begin{equation}{\label{3_4}}
\sum\limits_{(x',z') \in X \times Z} R_{\epsilon} (x',z'|c)  + R_{\epsilon} (c|c)  = 0 \mbox{.}
\end{equation}
\end{enumerate}
 As a result we obtain the following
\begin{thm}{\label{thm_3_1}}
Let $Z^{x}(t) \in Z,\;X^{z}(t) \in X$ be two families of discrete time Markov chains indexed by $x \in X$ and $z \in Z$ respectively, with generators $A^x,\; Q^z$ and stationary measures $\nu ^x(z)$ and $m^z(x)$ respectively. 
Given $(x_i,z_j),i = 1,...N,j = 1,...,M, \epsilon > 0$ there exist $\tau_\epsilon (x_i,z_j)$ and an additional state $c$ such that (5)-(10) hold. 
Hence, conditions (A), (B) are satisfied, $R_\epsilon$ is a generator of a discrete time Markov chain $Y(t) \in \cY$ and the function $g$ satisfying
$g(y) = m^z(x) \nu^x(z)$ for $y = (x,z)$ and $g(c) = \epsilon$ stands for its stationary measure.

\end{thm}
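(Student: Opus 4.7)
The plan is to build $R_\epsilon$ entry by entry in the order prescribed by items 1--4 preceding the theorem. The off-diagonal rates inside the natural space $X\times Z$ are already locked in by (\ref{2_1}), so the only real freedom lies in the diagonal parameters $\tau_\epsilon(x_i,z_j)$ and in the four new kinds of rates touching $c$. First I would choose $\tau_\epsilon$ large enough to leave ``slack'' for the new rates to be non-negative, then use the balance equations (\ref{3_1})--(\ref{3_3}) to \emph{define} the rates involving $c$, and finally verify that the leftover identity (\ref{3_4}) holds automatically.

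For step 1, in each of the sums (\ref{choice1}) and (\ref{choice2}) the only negative contribution is the diagonal term
$R_\epsilon(x_i,z_j|x_i,z_j) = -\tau_\epsilon(x_i,z_j)\,|A^{x_i}(z_j|z_j)\,Q^{z_j}(x_i|x_i)|$; every other summand is a product of absolute values, hence non-negative. Because $A^{x_i}(z_j|z_j)<0$, $Q^{z_j}(x_i|x_i)<0$ and $g>0$ throughout, taking $\tau_\epsilon(x_i,z_j)$ larger than the explicit threshold arising from either of (\ref{choice1})--(\ref{choice2}) drives both sums non-positive, as required.

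With $\tau_\epsilon$ fixed, (\ref{3_1}) forces $R_\epsilon(x_i,z_j|c) = -\epsilon^{-1}\sum_{(x,z)} R_\epsilon(x_i,z_j|x,z)\,g(x,z)$, which is non-negative by (\ref{choice1}); symmetrically (\ref{3_2}) gives $R_\epsilon(c|x_i,z_j)\ge 0$ via (\ref{choice2}). Equation (\ref{3_3}) then defines $R_\epsilon(c|c) = -\epsilon^{-1}\sum_{(x,z)} R_\epsilon(c|x,z)\,g(x,z)\le 0$, since each $R_\epsilon(c|x,z)\ge 0$. At this point condition (B) holds at every $(x_i,z_j)$ by (\ref{3_1}) and at $c$ by (\ref{3_3}); the column-sum part of (A) holds at every $(x_i,z_j)$ by (\ref{3_2}). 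Note that none of these steps actually require $m^z$ and $\nu^x$ to be stationary for their respective chains---only that $g>0$---so the proof never has to touch (\ref{1_1}) or (\ref{1_2}).

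The one nontrivial point, and the main (if mild) obstacle, is (\ref{3_4}): the column-sum condition at $c$. The plan is a double-sum swap. Substituting (\ref{3_1}) into $\sum_{(x',z')} R_\epsilon(x',z'|c)$ and interchanging the order of summation gives $-\epsilon^{-1}\sum_{(x,z)} g(x,z)\sum_{(x',z')} R_\epsilon(x',z'|x,z)$; the inner sum collapses via (\ref{3_2}) to $-R_\epsilon(c|x,z)$, and the remaining expression matches the right-hand side of (\ref{3_3}), yielding $-R_\epsilon(c|c)$. Hence (\ref{3_4}) is not an independent constraint but a consequence of (\ref{3_1})--(\ref{3_3}). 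With (A) and (B) now in hand and $R_\epsilon$ finite-dimensional, the Hille--Yosida theorem produces the desired Markov process on $\cY$, and $g$ is its stationary measure by construction.
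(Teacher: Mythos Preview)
Your proof is correct and follows exactly the construction laid out in the paper's items 1--4 preceding the theorem; the only part the paper leaves implicit (``One may show'' for (\ref{3_4})) is precisely what you supply via the double-sum swap, which is the intended argument. Your side observation that stationarity of $m^z$ and $\nu^x$ is never actually invoked---only positivity of $g$---is also correct and worth noting.
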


$Y_t$ is called the combined Markov chain.

\begin{remarks}
\begin{enumerate}
\item In the definition of $R_{\epsilon} (x',z'|x,z)$ (see (\ref{2_1})) one may use $A^{x'}(z'|z)$ instead of $ A^{x}(z'|z)$ and/or $Q^{z'}(x'|x)$ instead of $Q^{z}(x'|x)$. The  above arguments would still hold true.

\item Here is a intuitive way to make sense of transition state $c$. Suppose that instead of having to satisfy (A), (B)  we are interested in satisfying (A) only. We could then choose $\tau_{\epsilon} (x_i,z_j)>0, \; \forall i,j$ so that 
\begin{align}\label{remark_intro_eq_1}
\sum\limits_{(x,z) \in X \times Z} R_{\epsilon}(x',z'|x_i,z_j) =0.
\end{align}
Moreover,  we do not have to work with (\ref{choice1}) and (\ref{choice2}), and add state $c$. Using (\ref{remark_intro_eq_1}) only  we can define a dissipative matrix $R'_{\epsilon}$ on $X \times Z$. Let $Y'_t$ be a Markov chain generated by $R'_{\epsilon}$, with a stationary measure $g'$. When we run chain $Y'_t$, it wanders into states that $g$ does not favor (that is, states to which $g$ assigns a small mass  compared to $g'$). At this point process $Y_t$ would have an option of switching to $c$ and emerging in a region more favorable for $g$.

\item The fact that $\epsilon >0$ can be chosen arbitrarily small comes from the fact $c$ can  have  a jump rate arbitrarily large i.e transitions through $c$ can be made arbitrarily fast. It means that regardless of how large $g(X \times Z)$ is, one can always chose $g(c)= \epsilon$. The way the combined process $Y_t$ uses $c$ is a significant topic in this article.

\item A somewhat surprising feature of our construction is that it is based on the product of entries of marginal generators, rather than their sums as was the case in \cite{belopolskaya2015models}. This represents both a novelty and a challenge for future works as it re-ignites an old question of how to produce a new Markov chain from previously given ones in a meaningful manner. From this point view, an extension of the current construction to more general classes of Markov processes, particularly, diffusions would be interesting.
\end{enumerate}
\end{remarks}
\end{subsection}
\begin{subsection}{A minimal version of the single transitional state construction}
In (\ref{choice1}) and (\ref{choice2}), we set a criterion for choosing $\tau_{\epsilon}(x_i,z_j) >0$, admitting a wide range of legitimate choices (we simply say \enquote{ choose $\tau_{\epsilon}(x_i,z_j) >0$, large enough}). From this point on we shall narrow this down by imposing a stronger condition upon $\tau_{\epsilon}(x_i,z_j)$. This yields a unique $\tau_{\epsilon}(x_i,z_j)$, providing a minimal choice among possibilities allowed  by (\ref{choice1}) and (\ref{choice2}). To this end, we define
\begin{equation}\label{minimal_choice}
\begin{array}{l}
\tau_{\epsilon}(x_i,z_j)=\mbox{max}\bigg\{  \sum\limits_{\substack{(x,z) \in X \times Z \\ (x,z) \neq (x_i,z_j)}} 
\diy\frac{ \left|A^{x}(z_j|z)Q^{z}(x_i|x)\right|g(x,z)\;\;}{A^{x_i}(z_j|z_j)Q^{z_j}(x_i|x_i)g(x_i,z_j)}, \\
\qquad \qquad \qquad \qquad\sum\limits_{\substack{(x',z') \in X \times Z \\ (x',z') \neq (x_i,z_j)}} \diy\frac{ \left|A^{x_i}(z'|z_j)Q^{z_j}(x'|x_i)\right|}{A^{x_i}(z_j|z_j)Q^{z_j}(x_i|x_i)}\bigg\}.
\end{array}\end{equation}
Note that $\tau_{\epsilon}(x_i,z_j)$ from \eqref{minimal_choice} satisfies (\ref{choice1}), (\ref{choice2}). It depends on $g$, which is the product measure we are targeting. Moreover, for $(x,z) \in X \times Z$, either the rate $R_\epsilon(c|x,z)=0$ or $R_\epsilon (x,z|c)=0$. Essentially the choice of $\tau_{\epsilon}$ in \eqref{minimal_choice} prevents the combined process from going in cycles around $c$. It 
partitions the natural space $X \times Z $ into three types of states. The first type are those states 
from which the combined process can jump to $c$, the second are states where jumps from $c$ are possible. The third (possibly empty) set of states are those which allow no direct jumps to or from $c$.

\begin{remark}
An alternative way of choosing $\tau_{\epsilon} >0$ such that properties (\ref{choice1}) and (\ref{choice2}) hold true is to put $\tau_{\epsilon}(x_i,z_j) = \tau$ $\forall\; i,j$, where the value $\tau$ is defined as the smallest positive number such that \ 
$\forall \;x_i, \;z_j$,
\begin{align*}
&\sum\limits_{(x,z) \in X \times Z} (-\tau)^{\mathbbm{1}_{z_j}(z) \mathbbm{1}_{x_i}(x)} \left|A^{x}(z_j|z)Q^{z}(x_i|x)\right|g(x,z) \le 0,\;\;\hbox{and}\\
&\sum\limits_{(x',z') \in X \times Z} (-\tau)^{\mathbbm{1}_{z_j}(z') \mathbbm{1}_{x_i}(x')} \left| A^{x_i}(z'|z_j)Q^{z_j}(x'|x_i)\right| \le 0.
\end{align*}
Such a choice is more faithful to the original families $A^{x},Q^{z}$, as it changes all speeds uniformly. So we can call it a uniform choice. In this paper the uniform choice 
is not used, for it does not partition $X\times Z$ into the three types described above.
\end{remark}
\end{subsection}

\end{section}

\begin{section}{Application of the minimal construction to queueing theory}\label{example}
We apply the basic construction to an example. The example captures a phenomenon in the queueing systems, 
exhibits a general interaction between the environment and basic processes where the use of the transition 
state $c$ is transparent. 

\begin{subsection}{A formal description}\label{Formal_description_of_the_example}
Here we set $X= \{0,1,2, \cdots N\}$ and $Z=\{\mu_0, \mu_1, \cdots \mu_{M}\}$, where $\mu_{0}=\frac{1}{2} \mbox{,}\; \mu_{j}=\frac{1}{2} + \frac{j}{M}, j=1,2,\dots M $. Intuitively, a  basic state represents the number of jobs in the queue whereas the state of the environment represents the efficiency of the server. A higher value of $\mu$ leads to a higher output rate of the server. For a fixed $\mu \in Z$,
\begin{eqnarray}
\begin{split}
Q^{\mu}(i+l|i)&= 0 \mbox{, \;} \forall\; l \mbox{\; such that }\; |l|>1, \\
Q^{\mu}(i+1|i)&= 1 \mbox{, \; when \;} i = 0,1,\dots N-1, \\
Q^{\mu}(i-1|i)&= \mu \mbox{, \; when \;} i =1,2, \dots N, \\
Q^{\mu}(i|i)&=-(Q^{\mu}(i-1|i)+Q^{\mu}(i+1|i))=- (\mu+1),\; i=1,\dots N-1,\\
  Q^{\mu}(0|0)&= -1, \;\; Q^{\mu}(N|N)= -\mu .
\end{split}
\end{eqnarray}
Observe that $m^{\mu}(i)=\diy\frac{1}{\eta(\mu)}\frac{1}{\mu^{i}}$ is a stationary 
probability measure for the process, where $\eta(\mu)= \sum\limits_{i=0}^{N} \frac{1}{\mu^{i}}$ is a normalizing constant. (It is the only stationary probability measure, as the basic process is irreducible). Next for a fixed $i \in X \setminus \{0\}$,
\begin{eqnarray}
\begin{split}
A^{i}(\mu_{j+l}|\mu_{j})&= 0, \;\forall\; l \mbox{\; such that }\; |l|>1, \\
A^{i}(\mu_{j+1}|\mu_{j})&= i \mbox{, \; when \;} j=0,\dots M-1, \\
A^{i}(\mu_{j-1}|\mu_{j})&= 1 \mbox{, \; when \;} j=1, \dots M, \\
A^{i}(\mu_{j}|\mu_{j})&=-(A^{i}(\mu_{j+1}|\mu_{j})+ A^{i}(\mu_{j-1}|\mu_{j}))= -(i+1),\;
j=1,\dots M-1,\\
A^{i}(\mu_{0}|\mu_{0})&=-i, \;\; A^{i}(\mu_{0}|\mu_{0})=-1.\;\\
\end{split}
\end{eqnarray}
The Markov chain on $Z$ with generator $A^{i}$ has a unique stationary probability measure $\nu^{i}(\mu_j)= \frac{i^{j}}{\sigma(i)}$ where $\sigma(i)=\sum\limits_{j=0}^{M}i^{j}$. For $i=0$ we set 
\begin{align}
\begin{split}
A^{0}(\mu_{j+l}|\mu_{j})&=0,\; \forall\; l \mbox{\; such that }\; |l|>1, \\
A^{0}(\mu_{j+1}|\mu_{j})&= \frac{1}{M^2},\;\;j=0,\dots, M-1,\\
A^{0}(\mu_{j-1}|\mu_{j})&= 1,\;\; j=1, \dots, M,\\
A^{0}(\mu_{j}|\mu_{j})&= -1- \frac{1}{M^2}, j=1, \dots M-1,\\
A^{0}(\mu_{0}|\mu_{0})&= - \frac{1}{M^2}, \; A^{0}(\mu_{M}|\mu_{M})= - 1.
\end{split}
\end{align}
The stationary measure for generator $A^{0}$ is $\nu^{0}(\mu_j)=  \frac{1}{M^{2j}\sigma(0)} $ where  $\sigma(0)= 1+ o(\frac{1}{M^2})$.
As before, we construct a dissipative operator $R_{\epsilon}$ on $\cY= \{X \times Z\} \cup \{c\}$ by using \eqref{2_1}:
$$R_{\epsilon} (i',\mu'|i,\mu)= (-\tau_{\epsilon} (i,\mu))^{\mathbbm{1}_{\mu}(\mu') \mathbbm{1}_{i}(i')}| A^{i}(\mu'|\mu)Q^{\mu'}(i'|i)| \;\text{for} \; (i,\mu),(i',\mu') \in X \times Z$$. 
Again choose $\tau_{\epsilon}(i,\mu)$ minimally as in (\ref{minimal_choice}). Next, set: 
\begin{align}
\begin{split}
g((i,\mu))&=m^{\mu}(i)\nu^{i}(\mu) \mbox{\;,\;} \forall (i,\mu) \in X \times Z  \\
g(c) &=\epsilon \mbox{.}
\end{split}
\end{align}
Further define $R_{\epsilon} (i',\mu'|c)$, $R_{\epsilon} (c|i,\mu)$ and $R_{\epsilon} (c|c)$ by using (\ref{3_1}), (\ref{3_2}) and (\ref{3_3}). From Theorem \ref{thm_3_1} conclude that $R_{\epsilon}$ is a dissipative operator, it is the generator of a Markov process on $\cY$, with $g$ serving as a unique stationary measure.
As $M$  increases, we can see when the process jumps from the natural space $X \times Z$ to $c$. Indeed, $M \to \infty$ means that the spacing between consecutive levels of the server output is shrinking to zero. (Formally, the process on $Z$, for a fixed $i$ becomes like a diffusion.)
\begin{remark}
An intuitive picture of how the basic and environment processes interact is as follows:
\begin{enumerate}
\item For a fixed  $\mu \in Z$, the basic process is a continuous-time Markov chain 
on $X$. The state of the basic process captures the number of jobs in the queue. The arrival rate of jobs in the queue is $1$, the rate at which jobs are cleared from the queue is $\mu$. We think of $\mu$ as the efficiency at which the server is operating.
\item For fixed  $i>1$, the server looks to increase the output performance captured by the value of $\mu$. This is achieved by a drift towards higher values of $\mu$, and the  drift grows with  $i$. The value of $i$ captures the number of jobs in the queue: the more the number of jobs the more fervently the server tries to drive towards higher efficiency . When $i=0$, the server  tries to save power by developing a drift towards lower productivity. 
\item State $c$ can be interpreted as a maintenance state which the process attains to do  repairs. Exactly when a repair is needed is evident from specifying the states from which jumps to $c$ are possible. The nature of performed repairs will be revealed by identifying the states to which jumps from $c$ are plausible.
\item In its present form the construction works only when the number of basic and environment states are both finite.
That is, we can't  directly put $M= \infty$ or $N = \infty$ because the algorithm of finding the elements of the matrix $R_{\epsilon}$ must terminate. This makes it interesting to analyze the limits as $M$, $N$ $\to \infty$.
\end{enumerate}

\end{remark}
\end{subsection}

\begin{subsection}{A technical analysis}
Throughout the rest of the article $N$ is considered fixed. And we will study what happens as $M \to$  infinity. Remark 3 in Section 03, makes it important to delineate from which states in the natural space jumps to $c$ are possible, and which states jumps from $c$ are possible. The main tool for studying this will be Theorem \ref{delienation theorem}. The final conclusions are given in the remarks at the end of the subsection,  before doing all this we make some technical considerations.\\
 Observe that $ \forall \; i , \;j$, Next:
$\nu^{i}(\mu_{j+1})=i\nu^{i}(\mu_{j})$  and $m^{\mu_{j+1}}(i)= m^{\mu_{j}}(i) (1+o(\frac{1}{M^{\frac{7}{8}}}))$.
\begin{align}\label{recurrence_2}
\begin{split}
\frac{\nu^{i+1}(\mu_j)}{\nu^{i}(\mu_j)} &= \left. \frac{(i+1)^{j}\;i}{(i+1)^{M+1}-1} \right/ \frac{i^{j}(i-1)}{(i)^{M+1}-1}\\
&= \frac{i^{M+2-j}}{(i+1)^{M+1-j}(i-1)}(1+o(\frac{1}{M})) \mbox{.}\\
\frac{\nu^{i-1}(\mu_j)}{\nu^{i}(\mu_j)} &= 
\frac{i^{M+1-j}}{(i-1)^{M+2-j}}(i-2)(1+o(\frac{1}{M})),\; \forall \; i>2 \mbox{.} \\
\end{split}
\end{align}

The foregoing argument implies the following recurrence relations: when $i \neq {1,0}$,
\begin{align}\label{recur1}
\begin{split}
g(i,\mu_{j+1})&=i g(i,\mu_{j})(1+o(\frac{1}{M^{\frac{7}{8}}})), \text{hence}\\
g(i,\mu_{j+l})&=i^l g(i,\mu_{j})(1+o(\frac{1}{M^{\frac{1}{9}}})), \text {when}\;  0<l< M^{\frac{3}{4}} \mbox{;}\\
e^{-C M^{\frac{1}{8}}}i^l g(i,\mu_{j})&(1+o(\frac{1}{M^{\frac{1}{9}}}))\le g(i,\mu_{j+l})\le e^{C M^{\frac{1}{8}}}i^l g(i,\mu_{j})(1+o(\frac{1}{M^{\frac{1}{9}}})).
\end{split}
\end{align}
Here\; $C$ is a positive contant, depending on $N$. Furhtermore
\begin{align}\label{recur2}
\begin{split}
g(i,\mu_{j-1})&=\frac{1}{i} g(i,\mu_{j})(1+o(\frac{1}{M^{\frac{7}{8}}})),\\
g(i+1,\mu_{j})&=g(i,\mu_{j}) \frac{1}{\mu_{j}}\frac{i^{M+2-j}}{(i+1)^{M+1-j}(i-1)}(1+o(\frac{1}{M})) \mbox{\;(when \;} i \ge 2),\\
g(i-1,\mu_{j})&= g(i,\mu_{j})\mu_{j} \frac{i^{M+1-j}(i-2)}{(i-1)^{M+1-j}(i-1)}(1+o(\frac{1}{M})) \mbox{\;(when \;} i \ge 3) \mbox{.}
\end{split}
\end{align}
Observe that the term denoted as $o(\frac{1}{M})$ above  can be chosen so that its 
decay as $M \to\infty$  depends only on $N$ and is uniform in $i$ and $j$. (This fact will be used in subsequent calculations.) Formally this would be the statement that given $\varepsilon>0$, there exists $M''>0$, such that $\forall \; M'> M''$ and $\forall \; i,\;j$, we have $M'^{\frac{7}{8}} [\frac{g(i,\mu_{j+1})}{ig(i,\mu_{j})}-1] \le \varepsilon$. A similar uniform bound holds for (\ref{recurrence_2}) (\ref{recur1}) and (\ref{recur2}).\\
The following theorem follows trivially from the construction
\begin{thm}{\label{delienation theorem}}
A jump from $(i, \mu_{j})$  to $c$ is possible if and only if
\begin{align}\label{nec_cond}
\begin{split}
&\sum\limits_{\substack{(i', \mu_{j'}) \in X \times Z \\ (i', \mu_{j'}) \neq (i, \mu_{j})}} \frac{ |A^{i'}(\mu_{j}|\mu_{j'})Q^{\mu_{j}}(i|i')|g((i', \mu_{j'}))} {A^{i}(\mu_{j}|\mu_{j})Q^{\mu_{j}}(i|i)g(i,\mu_{j})} 
 \ge  \sum\limits_{\substack{(i', \mu_{j'}) \in X \times Z \\ (i', \mu_{j'}) \neq (i, \mu_{j})}}  \frac{|A^{i}(\mu_{j'}|\mu_{j})Q^{\mu_{j'}}(i'|i)|}  {A^{i}(\mu_{j}|\mu_{j})Q^{\mu_{j}}(i|i)} \\
 & \Leftrightarrow \sum\limits_{\substack{(i', \mu_{j'}) \in X \times Z \\ (i', j') \neq (i, j)}} | A^{i'}(\mu_{j}|\mu_{j'})Q^{\mu_{j}}(i|i')|g((i', \mu_{j'})) 
\ge \sum\limits_{\substack{(i', \mu_{j'}) \in X \times Z \\ (i', \mu_{j'}) \neq (i, \mu_{j})}} |A^{i}(\mu_{j'}|\mu_{j})Q^{\mu_{j'}}(i'|i)|g(i,\mu_{j}) \\
 &\Leftrightarrow \tau_{\epsilon} (i,\mu_{j})= \sum\limits_{\substack{(i', \mu_{j'}) \in X \times Z \\ (i', \mu_{j'}) \neq (i, \mu_{j})}} \frac{ | A^{i'}(\mu_{j}|\mu_{j'})Q^{\mu_{j}}(i|i')| \;g((i', \mu_{j'}))}  {A^{i}(\mu_{j}|\mu_{j})Q^{\mu_{j}}(i|i)g(i,\mu_{j})} \mbox{.}
\end{split}
\end{align}
\end{thm}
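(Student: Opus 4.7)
The plan is to unpack the construction directly. By equation (\ref{3_2}),
\[
R_\epsilon(c\,|\,i,\mu_j) \;=\; -\!\!\sum_{(i',\mu_{j'}) \in X \times Z} R_\epsilon(i',\mu_{j'}\,|\,i,\mu_j),
\]
so a jump from $(i,\mu_j)$ to $c$ is possible precisely when the right-hand side is (strictly) positive. First I would isolate the diagonal contribution. From (\ref{2_1}), $R_\epsilon(i,\mu_j\,|\,i,\mu_j) = -\tau_\epsilon(i,\mu_j) A^{i}(\mu_j|\mu_j) Q^{\mu_j}(i|i)$, where $A^{i}(\mu_j|\mu_j) Q^{\mu_j}(i|i) > 0$ because both diagonal entries are strictly negative, and the remaining off-diagonal contributions are $R_\epsilon(i',\mu_{j'}\,|\,i,\mu_j) = |A^{i}(\mu_{j'}|\mu_j) Q^{\mu_{j'}}(i'|i)| \geq 0$.

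Next I would translate the nonnegativity of $R_\epsilon(c\,|\,i,\mu_j)$ into the condition
\[
\tau_\epsilon(i,\mu_j)\, A^{i}(\mu_j|\mu_j) Q^{\mu_j}(i|i) \;\geq\; \sum_{(i',\mu_{j'}) \neq (i,\mu_j)} |A^{i}(\mu_{j'}|\mu_j) Q^{\mu_{j'}}(i'|i)|,
\]
i.e., $\tau_\epsilon(i,\mu_j)$ is at least the second term in the max of \eqref{minimal_choice}. Since the minimal choice sets $\tau_\epsilon(i,\mu_j)=\max(\text{first term},\text{second term})$, the jump rate $R_\epsilon(c\,|\,i,\mu_j)$ is nontrivial exactly when the first term of the max dominates the second. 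Dividing that inequality by the positive quantity $A^{i}(\mu_j|\mu_j) Q^{\mu_j}(i|i)\, g(i,\mu_j)$ yields the first line of (\ref{nec_cond}).

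The remaining two lines of (\ref{nec_cond}) are then equivalent reformulations. The middle line is obtained by multiplying both sides of the first through by $A^{i}(\mu_j|\mu_j) Q^{\mu_j}(i|i)\, g(i,\mu_j) > 0$. The last line simply records that, under this condition, the max in \eqref{minimal_choice} is attained at its first argument, so $\tau_\epsilon(i,\mu_j)$ equals the displayed ratio expression; conversely, equality there forces the first argument to be the maximum, recovering the previous inequality.

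Since the statement is marked as following trivially from the construction, I do not anticipate any real obstacle. The only care required is in sign bookkeeping, particularly the observation that on the diagonal $|A^{i}(\mu_j|\mu_j) Q^{\mu_j}(i|i)|$ equals the unsigned product $A^{i}(\mu_j|\mu_j) Q^{\mu_j}(i|i)$ (both factors are negative), which is what allows the denominators in (\ref{nec_cond}) to be written without absolute value bars.
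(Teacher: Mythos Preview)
Your proposal is correct and is exactly what the paper intends: it states that the theorem ``follows trivially from the construction'' and gives no further proof, so your unpacking of equations~(\ref{2_1}), (\ref{3_2}) and (\ref{minimal_choice}) is precisely the argument implicit in that remark. The only minor point is the strict--versus--nonstrict inequality at the boundary (when the two arguments of the $\max$ coincide both rates vanish), but the paper's own formulation with $\geq$ is already loose on this edge case, so your treatment matches it.
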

In what follows we characterize when (\ref{nec_cond}) holds true for $M$ large enough. We will  use the abbreviation $g(i,\mu_{j})= g(i,j)$.
When $i \neq 0,1,N$ and $j \neq 0,M$, we have that
\begin{align}\label{row_sums}
\begin{split}
&\sum\limits_{\substack{(i', \mu_{j'}) \in X \times Z \\ (i', \mu_{j'}) \neq (i, \mu_{j})}} (-1)^{\mathbbm{1}_{j}(j')}(-1)^{\mathbbm{1}_{i}(i')}A^{i}(\mu_{j'}|\mu_{j})Q^{\mu_{j'}}(i'|i)g(i,j) \\= &g(i,j) \;\left[2(\mu_{j-1}+1)+(i+1)(\mu_{j}+1)+2i(\mu_{j+1}+1)\right]\\
&= g(i,j)\left[3\mu_{j}+3\mu_{j}i+3i+3 + o(\frac{1}{\sqrt{M}})\right].
\end{split}
\end{align}
The following calculation is valid for $i \neq 0,1,2$ ((\ref{recur1}) and (\ref{recur2}) are instrumental here):
\begin{align}\label{column_sums}
\begin{split}
&\sum\limits_{\substack{(i', \mu_{j'}) \in X \times Z \\ (i', \mu_{j'}) \neq (i, \mu_{j})}} \left| A^{i'}(\mu_{j}| \mu_{j'})Q^{\mu_{j}}(i|i')\right|g((i',j')) \\
&\qquad\qquad = \mu_{j}(g(i+1,j-1) +2g(i+1,j)+g(i,j+1)+g(i+1,j+1))\\
&\qquad\qquad\qquad\qquad+ \mu_{j}i(g(i,j-1)+g(i+1,j-1)+g(i+1,j))\\
&\qquad\qquad\qquad\qquad + i(g(i-1,j-1)+g(i,j-1)+g(i-1,j))\\
&\qquad\qquad\qquad\qquad+ 1(g(i,j+1)+ g(i-1,j+1)+ g(i-1,j)- g(i-1,j-1))\\
&\qquad\qquad\qquad\qquad\ge (i-1)g(i-1,j) 
= g(i,j)\left[(\frac{i}{i-1})^{M+1-j}\mu_{j}(i-2) + o(\frac{1}{\sqrt{M}})\right].
\end{split}
\end{align}
 Now recall: $N$ is fixed and $\mu_j \le \frac{3}{2}$. As $i \neq 0,1,2$, comparing (\ref{column_sums}) and (\ref{row_sums}) shows that for  given $\epsilon' \in  (0,1)$ we can choose M large enough such that $\forall \; j \le \epsilon' \times M$ and $\forall \; i > 1$ the conditions in (\ref{nec_cond}) are satisfied. (In fact, direct computations show the above fact to be true for $i=2$.)
\begin{remark}
When $i \neq 0,1$, the environment has a propensity to move towards a higher efficiency. When $j \le \epsilon' \times M$, the system achieves this by jumping to the transition state $c$. The Minimality property in (\ref{minimal_choice}) ensures that the process  cannot revert from $c$ to a low-efficiency state. Instead, it must progress to a 
higher-efficiency of the server . It can also reject some jobs and go to a state with $i  \in  \{0,1\}$; in subsequent Sections we will refine the construction to prevent such a possibility. 
\end{remark}

To carry out calculations for $i=0$, observe that for $j \ne 0$:
\begin{equation}
\sum\limits_{\substack{(i', \mu_{j'}) \in X \times Z \\ (i', \mu_{j'}) \neq (0, \mu_{j})}} |A^{0}(\mu_{j'}|\mu_{j})Q^{\mu_{j'}}(i'|0)|g(0,j)= o(\frac{1}{M^2}) \times \frac{1}{\eta(\mu_j)} ,\end{equation}
and
\begin{equation}\begin{array}{l}
\sum\limits_{\substack{(i', \mu_{j'}) \in X \times Z \\ (i', \mu_{j'}) \neq (0, \mu_{j})}}  |A^{i'}(\mu_{j}| \mu_{j'})Q^{\mu_{j}}(0|i')|g((i',j'))\\
\qquad\qquad\qquad \ge A^{1}(\mu_{j}| \mu_{j-1})Q^{\mu_{j}}(0|1)g((1,j-1))
= \frac{\mu_{j}}{\mu_{j-1}}\frac{1}{M} \frac{1}{\eta(\mu_{j-1})}\mbox{.}
\end{array}
\end{equation}
Now, for $M$ large enough, by virtue of (\ref{nec_cond}) there is a possibility of jump from $(0, \mu_{j})$ to $c$, when $j \ne 0$. On the  other hand, 
when $j=0$, a comparison of the two terms in (\ref{nec_cond}) reveals that the process can jump from $c$ to $(0,\mu_{0})$. Then minimality guarantees that a jump in the opposite direction is impossible.

\begin{remark}
When $i=0$, the natural disposition for the process is to attain the lowest efficiency. If  $\mu \ne \frac{1}{2}$, the process jumps to $c$ from a state where by minimality (\ref{minimal_choice}) it may go to $\mu= \frac{1}{2}$. Or it can create some jobs for itself and attain a state $i> 0$. This is impractical but possible under the present construction. In the next Section we will make revisions to outlaw this.
\end{remark}
\end{subsection}
\end{section}

\begin{section}{A construction with multiple maintenance states}\label{construc_multiple_maintainance_states}
The remarks in Section 3 necessitate a construction more suitable to the queuing system
philosophy. Specifically the maintenance state in the single state construction for the queuing example may create or delete jobs which is undesirable. To this end we introduce here multiple transition states. We first describe the construction in the the specific case of the queuing example, section 4.2 has theorems which evince that indeed in this multiple maintenance state framework the above mentioned creation and deletion of jobs has \enquote{negligible} probability. Section 4 shows that not only the construction is robust enough to handle generalizations to multiple transition states, but also such generalizations are often intuitively necessary. \par Suppose sets $X$, $Z$,  families $A^{i},\; Q^{\mu} $ and functions $m^{\mu}(i), \; \nu^{i}(\mu)$ are as in Section 3. Set $\widehat{\cY}= X \times Z \cup \{c_0,c_1,c_2, \cdots c_N\}$. We will construct a dissipative matrix $R_{\epsilon}$ on $\widehat{\cY}$ by using the family of operators $A^{i}$, and $Q^{\mu}$, which will have $g$ (defined below) as a stationary measure. We set
\begin{align}\label{stat_measure_for_multiple_main_states}
\begin{split}
g((i,\mu))&=m^{\mu}(i)\nu^{i}(\mu) \mbox{\;,\;} \forall (i,\mu) \in X \times Z  \\
g(c) &=\epsilon, \forall \; c  \in \{c_0,c_1,c_2, \cdots c_N\} \mbox{.}
\end{split}
\end{align}
The construction below is similar to the one described in Section \ref{general_construction}, except for multiplicity of transient states. This can be understood as follows, in Section \ref{Formal_description_of_the_example} we partition the natural space $X \times Z$ into $N+1$ parts according to the number of jobs in the queue. We now make sure $c_i$ interacts only with elements of the $i$-th partition. There might still be jumps between the $c_i$'s, the probability of such jumps is shown to be very small (see Theorems \ref{first_theorem_last_section} and \ref{second_theorem_last_section} ). Thus the maintenance states only alter the server efficiency, rarely creating or rejecting jobs. The construction can be applied in a variety of settings where there is a partition of the natural space as described above.

\begin{subsection}{Description of the refined construction on the example}\label{Construction_of_combined process}
We construct $R_{\epsilon}$, a matrix of dimension $|\widehat{\cY}| \times |\widehat{\cY}|$. We describe its entries in a sequence of steps. In order to ensure the matrix is  dissipative, the row sums must be zero, and all non-diagonal entries non-negative. We  use the notation introduced in Section \ref{Introduction}. 
\begin{enumerate}

\item  Given $ (i,\mu),(i',\mu') \in X \times Z.$Define
\begin{equation}
 R_{\epsilon} (i',\mu'|i,\mu)= (-\tau_{\epsilon} (i,\mu))^{\mathbbm{1}_{\mu}(\mu') \mathbbm{1}_{i}(i')} \left| A^{i}(\mu'|\mu)Q^{\mu'}(i'|i)\right|
\end{equation}
where
\begin{equation}\label{tau_multiple_state_construction}
\tau_{\epsilon}(i,\mu)=
\mbox{max}\left\{  \sum\limits_{\substack{(i',\mu') \in X \times Z \\ (i',\mu') \neq (i,\mu)}} \frac{ |A^{i'}(\mu|\mu')Q^{\mu}(i|i')|g(i',\mu')} {A^{i}(\mu|\mu)Q^{\mu}(i|i)g(i,\mu)} ,
 \sum\limits_{\substack{(i',\mu') \in X \times Z \\ (i',\mu') \neq (i,\mu)}} \frac{ |A^{i}(\mu'|\mu)Q^{\mu'}(i'|i)|}{A^{i}(\mu|\mu)Q^{\mu}(i|i)} \;\;\right\}.
\end{equation}

\item Next, set $\forall \; (i,\mu) \in X \times Z, \; j \in X, \; j \neq i \;  R_{\epsilon}(c_j|(i,\mu))=R_{\epsilon}((i,\mu)|c_j)=0$. 
Furthermore $\forall \; (i,\mu) \in X \times Z$ define
\begin{equation*}
R_{\epsilon}(c_i|(i,\mu)) = \begin{cases}
&0, \; \; \text{when  $\; \tau_{\epsilon}(i,\mu)= \sum\limits_{\substack{(i',\mu') \in X \times Z \\ (i',\mu') \neq (i,\mu)}}\;
\diy\frac{\left| A^{i}(\mu'|\mu)Q^{\mu'}(i'|i)\right|\;\;}{A^{i}(\mu|\mu)Q^{\mu}(i|i)}$}\\
&-\sum\limits_{(i',\mu') \in X \times Z}R_{\epsilon}((i',\mu')|(i,\mu)), \; \;\text{otherwise}.
\end{cases}
\end{equation*}

Note that the way in which we constructed $\tau_{\epsilon}(i,\mu)$ in (\ref{1_1}) ensures that $R_{\epsilon}(c_i|(i,\mu)) \ge 0$. 
\item Define 
\begin{equation*}
R_{\epsilon}((i,\mu)|c_i) = \begin{cases}
&0, \; \; \text{when  $\; \tau_{\epsilon}(i,\mu)= \sum\limits_{\substack{(i',\mu') \in X \times Z \\ (i',\mu') \neq (i,\mu)}}\; 
\diy\frac{ \left| A^{i'}(\mu|\mu')Q^{\mu}(i|i')\right|g((i',\mu'))}{A^{i}(\mu|\mu)Q^{\mu}(i|i)g(i,\mu)}$}\\
&-\sum\limits_{(i',\mu') \in X \times Z}\diy\frac{R_{\epsilon}((i,\mu)|(i',\mu'))g(i',\mu')}{\epsilon},  \; \;\text{otherwise}.
\end{cases}
\end{equation*}
Again, note that $R_{\epsilon}((i,\mu)|c_i) \ge 0$. The choices made thus far guarantee that 
$\forall (i,\mu) \in X \times Z$, we have
\begin{align}\label{init_crit_for_multiple_states}
&\sum\limits_{(i',\mu') \in X \times Z}R_{\epsilon}((i',\mu')|(i,\mu)) + R_{\epsilon}(c_i|(i,\mu))=0 \nonumber\\
& \sum\limits_{(i',\mu') \in X \times Z}R_{\epsilon}((i,\mu)|(i',\mu'))g(i',\mu') +
R_{\epsilon}((i,\mu)|c_i)\times \epsilon =0.
\end{align}
\item All that remains is to choose $R_{\epsilon}(c_j|c_i)$, where $i,j = \{0,1 \ldots, N\}$. For any $i,j$ with $|i-j|>1$, set $R_{\epsilon}(c_j|c_i)=0$
\item For $|i-j| \le 1$ we define $R_{\epsilon}(c_j|c_i)$ in a recurrent manner.
First, set

\begin{align}\label{definition_V's}
\begin{split}
{V_r}^{(N)}&= \sum\limits_{\mu \in Z}R_{\epsilon}(c_N|(N,\mu))\times g(N,\mu)\mbox{,}\\
{V_c}^{(N)}&= \sum\limits_{\mu \in Z}R_{\epsilon}((N,\mu)|c_N)\times \epsilon \mbox{,}\\
{V_m}^{(N)}&= \text{max}\{{V_r}^{(N)}, {V_c}^{(N)}\}\mbox{.}
\end{split}
\end{align}

Next, choose $R_{\epsilon}(c_N|c_N)= \diy -\frac{{V_m}^{(N)}}{\epsilon}$ and
\begin{equation*}
R_{\epsilon}(c_{N-1}|c_N) = \begin{cases}
&0, \; \text{when ${V_c}^{(N)} \ge {V_r}^{(N)}$}\\
& \diy \frac{{V_r}^{(N)}-{V_c}^{(N)}}{\epsilon}, \; \text{otherwise}.
\end{cases}
\end{equation*}
Also set
\begin{equation*}
R_{\epsilon}(c_N|c_{N-1}) = \begin{cases}
&0, \; \text{when ${V_c}^{(N)} \le {V_r}^{(N)}$}\\
& \diy \frac{{V_c}^{(N)}-{V_r}^{(N)}}{\epsilon}, \; \text{otherwise}.
\end{cases}
\end{equation*}
These choices  ensure that,
\begin{align}\label{eqns_for_c_N}
\begin{split}
&R_{\epsilon}(c_{N-1}|c_N) \ge 0, \; R_{\epsilon}(c_N|c_{N-1}) \ge 0,\;  R_{\epsilon}(c_N|c_N) <0, \; R_{\epsilon}(c_{N-1}|c_N)R_{\epsilon}(c_N|c_{N-1})=0\\
 &\sum\limits_{\mu \in Z}R_{\epsilon}((N,\mu)|c_N) + R_{\epsilon}(c_N|c_N) + R_{\epsilon}(c_{N-1}|c_N)=0 \\
& \sum\limits_{\mu \in Z}R_{\epsilon}(c_N|(N,\mu))g(N,\mu)+\epsilon\times(R_{\epsilon}(c_N|c_N)+R_{\epsilon}(c_N|c_{N-1}))=0\mbox{.}
\end{split}
\end{align}
This indicates that either jumps from $c_N$ to $c_{N-1}$ are or jumps from $c_{N-1}$ to $c_N$ are impossible.
\item We now define the quantities $R_{\epsilon}(c_l|c_{l'})$, with $|l-l'| \le 1, \; l' \ne 0$. The case where $l'=N$ or $l=N$ has by this time been dealt with.  we define the quantities $R_{\epsilon}(c_j|c_{j-1})$, $R_{\epsilon}(c_{j-1}|c_j)$ and $R_{\epsilon}(c_j|c_j)$ with $j>0$, given we  have already
defined $R_{\epsilon}(c_j|c_{j+1})$ and $R_{\epsilon}(c_{j+1}|c_j)$. By recursion we will have 
defined $R_\epsilon(c_i|c_j)$, $\forall \; i, \; j = 0,1,\ldots M$, except for $i=j= 0$. Set:
\begin{align}\label{definition_V's_genj}
\begin{split}
{V_r}^{(j)}&= \sum\limits_{\mu \in Z}R_{\epsilon}(c_j|(j,\mu))\times g(j,\mu) + \epsilon\times R_{\epsilon}(c_j|c_{j+1})\mbox{.}\\
{V_c}^{(j)}&= \sum\limits_{\mu \in Z}R_{\epsilon}((j,\mu)|c_j)\times \epsilon+ R_{\epsilon}(c_{j+1}|c_j)\times \epsilon \mbox{.}\\
{V_m}^{(j)}&= \text{max}\{{V_r}^{(j)}, {V_c}^{(j)}\} \mbox{.}
\end{split}
\end{align}

Choose $R_{\epsilon}(c_j|c_j)= -\diy \frac{{V_m}^{(j)}}{\epsilon}$. Choose
\begin{equation*}
R_{\epsilon}(c_{j-1}|c_j) = \begin{cases}
&0, \; \text{when ${V_c}^{(j)} \ge {V_r}^{(j)}$}\\
& \diy\frac{{V_r}^{(j)}-{V_c}^{(j)}}{\epsilon}, \; \text{otherwise}.
\end{cases}
\end{equation*}
Also select
\begin{equation*}
R_{\epsilon}(c_j|c_{j-1}) = \begin{cases}
&0, \; \text{when ${V_c}^{(j)} \le {V_r}^{(j)}$}\\
& \diy\frac{{V_c}^{(j)}-{V_r}^{(j)}}{\epsilon}, \; \text{otherwise}.
\end{cases}
\end{equation*}
Note that these choices  ensure 
\begin{align}\label{eqns_for_c_j}
\begin{split}
&R_{\epsilon}(c_{j-1}|c_j) \ge 0, \; R_{\epsilon}(c_j|c_{j-1}) \ge 0,\;  R_{\epsilon}(c_j|c_j) <0, \; R_{\epsilon}(c_{j-1}|c_j)R_{\epsilon}(c_j|c_{j-1})=0\mbox{,}\\
 &\sum\limits_{\mu \in Z}R_{\epsilon}((j,\mu)|c_j) + R_{\epsilon}(c_j|c_j) + R_{\epsilon}(c_{j-1}|c_j)+  R_{\epsilon}(c_{j+1}|c_j)=0 \mbox{,}\\
& \sum\limits_{\mu \in Z}R_{\epsilon}(c_j|(j,\mu))g(j,\mu)+\epsilon \bigg(R_{\epsilon}(c_j|c_j)+R_{\epsilon}(c_j|c_{j-1})+ R_{\epsilon}(c_j|c_{j+1})\bigg)=0.
\end{split}
\end{align}

\item Finally when $j=0$, we define $R_{\epsilon}(c_0|c_0)=- \left[\sum\limits_{\mu \in Z}R_{\epsilon}((0,\mu)|c_0) +  R_{\epsilon}(c_1|c_0) \right]$, this implies that $\sum\limits_{\mu \in Z}R_{\epsilon}((0,\mu)|c_0) + R_{\epsilon}(c_0|c_0) + R_{\epsilon}(c_1|c_0)=0$. One can prove an assertion similar to Theorem \ref{thm_3_1} to get that 
$$
 \sum\limits_{\mu \in Z}R_{\epsilon}(c_0|(0,\mu))g(0,\mu)+\epsilon\bigg(R_{\epsilon}(c_0|c_0)+ R_{\epsilon}(c_0|c_1)\bigg)=0.
$$
\end{enumerate}
Thus we have now defined a combined process on $\widehat{\cY}$ with $g$ (as defined in 
(\ref{stat_measure_for_multiple_main_states})) as a stationary measure.
\end{subsection}

\begin{subsection}{Analysis of transitional rates}
In the above construction, each state $c_i$ can be connected to $(j,\mu)$ only if $j=i$. Consequently, the only way for jobs to be created or deleted during maintenance is by communications between the various $c_i$'s. However the probabilities $\frac{R(c_j|c_i)}{|R(c_i|c_i)|}$ will be shown to be \enquote{small} in this section. 
 
We start our analysis with $i=N$, and then proceed to smaller $i$'s.
\begin{thm}\label{first_theorem_last_section}
For $N \ge 2$
$$\max \Big[\;R_\epsilon(c_N|c_{N-1}), R_\epsilon(c_{N-1}|c_N)\; \Big] 
= o(\frac{1}{M^{\frac{1}{9}}})\times\; \min\Big[\;|R_\epsilon(c_N|c_{N})|,\;
|R_\epsilon(c_{N-1}|c_{N-1})|\;\Big].$$ 
\end{thm}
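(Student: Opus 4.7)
The plan is to recast the statement as $|V_r^{(N)} - V_c^{(N)}| = o(M^{-1/9}) \min(V_m^{(N)}, V_m^{(N-1)})$. Indeed, step 5 of the construction makes exactly one of $R_\epsilon(c_N|c_{N-1}), R_\epsilon(c_{N-1}|c_N)$ nonzero, with their maximum equal to $|V_r^{(N)} - V_c^{(N)}|/\epsilon$, while $|R_\epsilon(c_N|c_N)| = V_m^{(N)}/\epsilon$; step 6 gives $|R_\epsilon(c_{N-1}|c_{N-1})| = V_m^{(N-1)}/\epsilon$.

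Next I would use the dissipativity constraints and the choices made in steps 2--3 to write $V_r^{(N)} - V_c^{(N)} = \sum_\mu U(\mu)$, where
\[U(\mu) = \sum_{(i',\mu')\ne (N,\mu)}\bigl[|A^{i'}(\mu|\mu')Q^\mu(N|i')|g(i',\mu') - |A^N(\mu'|\mu)Q^{\mu'}(i'|N)|g(N,\mu)\bigr]\]
is the signed net flux into $(N,\mu)$ in the bare natural chain. Two simplifications are crucial: the intra-slice contributions ($i'=N$, $\mu'\ne \mu$) are antisymmetric under $\mu\leftrightarrow\mu'$ and vanish upon summation, while $Q^\mu(N|i')$ vanishes outside $\{N-1,N\}$, leaving only $i'=N-1$ cross-slice terms. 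Using $\sum_\mu|A^{N-1}(\mu|\mu')| = 2|A^{N-1}(\mu'|\mu')|$ from dissipativity of $A^{N-1}$, this reduces to
\[V_r^{(N)} - V_c^{(N)} = 2\sum_{\mu}|A^{N-1}(\mu|\mu)|g(N-1,\mu) - \sum_{\mu}g(N,\mu)\sum_{\mu'}\mu'|A^N(\mu'|\mu)|.\]

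The heart of the argument is showing that these two sums cancel at leading order. Substituting $g(i,\mu_j) = (\eta(\mu_j)\mu_j^i)^{-1}\, i^j/\sigma(i)$ and applying the recurrences (\ref{recur1})--(\ref{recur2}), both sums reduce to geometric series in $k = M-j$ concentrated near $k=0$, with leading coefficients that match identically: a direct computation shows each sum equals $6N/(N-1)\cdot g(N,\mu_M)$ at leading order, thanks to the algebraic identity $g(N-1,\mu_M)/g(N,\mu_M) \approx (3/2)N(N-2)/(N-1)^2$ coming from $\sigma(N)/\sigma(N-1)\approx (N/(N-1))^{M+1}(N-2)/(N-1)$. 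The residuals are: (i) an $O(1/M)\cdot g(N,\mu_M)$ contribution from boundary rates at $\mu_0,\mu_M$, from the $O(k/M)$ variation of $\mu_{M-k}$, and from the $2(N-1)/M$ drift term in $\sum_{\mu'}\mu'|A^N(\mu'|\mu)|$; (ii) an $o(M^{-1/9})\cdot g(N,\mu_M)$ contribution from compounding the $o(M^{-7/8})$ per-step error in (\ref{recur1}) over at most $O(M^{3/4})$ terms (uniform by the remark after (\ref{recur2})); and (iii) exponentially small tails from $k > M^{3/4}$ and from $\mu_0$. Hence $|V_r^{(N)} - V_c^{(N)}| = o(M^{-1/9})\, g(N,\mu_M)$.

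To close, I would lower-bound $V_m^{(N)}$ and $V_m^{(N-1)}$. A direct computation at $\mu_M$ using the boundary rates $|A^{N-1}(\mu_M|\mu_M)| = 1 = |A^N(\mu_M|\mu_M)|$ gives $U(\mu_M) = -3(N-1)^{-2}g(N,\mu_M) + O(1/M)$; since $V_m^{(N)}\ge \tfrac12(V_r^{(N)}+V_c^{(N)}) = \tfrac12\sum_\mu|U(\mu)|\ge \tfrac12|U(\mu_M)|$, we obtain $V_m^{(N)}\ge c(N)g(N,\mu_M)$ for some $c(N)>0$ and $M$ large; a parallel estimate at the $(N-1)$-slice bounds $V_m^{(N-1)}$ below by $c'(N) g(N-1,\mu_M) = \Theta(g(N,\mu_M))$. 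Combined with the previous step this yields the theorem. The main technical hurdle is the leading-order cancellation: each of the two sums in the reduced identity is individually $\Theta(g(N,\mu_M))$, so controlling their difference to $o(M^{-1/9})\, g(N,\mu_M)$ depends on precisely the product-form structure of $g$ together with uniform error propagation through the recurrences over the dominant range $k \le M^{3/4}$.
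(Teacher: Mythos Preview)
Your strategy is essentially the paper's: both reduce $\max[R_\epsilon(c_N|c_{N-1}),R_\epsilon(c_{N-1}|c_N)]$ to $|V_r^{(N)}-V_c^{(N)}|/\epsilon$, cancel the intra-slice ($i'=N$) contributions so that only the $i'=N-1$ cross terms remain, and then control the residual using the recurrences \eqref{recur1}--\eqref{recur2} over the dominant window $0\le M-j\le M^{3/4}$, with exponentially small tails beyond. The lower bound on the diagonal rate is also obtained in the same way, via the single state $(N,\mu_M)$; indeed $|U(\mu_M)|=\epsilon\,R_\epsilon((N,\mu_M)\mid c_N)=\mu_M g(N,\mu_M)/(N-1)^2$ up to $o(M^{-1/9})$, so your constant $3(N-1)^{-2}$ should be $3/(2(N-1)^2)$, which is immaterial.

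The only genuine organizational difference is in how the cancellation is exhibited. The paper forms the \emph{difference} $\tau_1^{(N)}-\tau_2^{(N)}$ as a single sum \eqref{difference_of_taus} and splits it into $U^1$ (the last $M^{3/4}$ indices) and $U^2$; the $U^1$ piece cancels exactly against the $j=M$ boundary term and $U^2$ is shown negligible. You instead compute each of the two sums separately to the common value $6N/(N-1)\,g(N,\mu_M)$ and then subtract. Both routes work, but yours requires a point your sketch understates: the boundary rates at $j=M$ (where $|A^{N-1}(\mu_M|\mu_M)|=1$ rather than $N$, and $\sum_{\mu'}\mu'|A^N(\mu'|\mu_M)|\approx 3$ rather than $3(N+1)$) contribute at order $\Theta(g(N,\mu_M))$, not $O(1/M)$, so they must be included in the leading-order calculation, not treated as residuals. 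Once you fold those boundary terms into the leading order, both sums do equal $6N/(N-1)\,g(N,\mu_M)$ and your error list (i)--(iii) finishes the job.
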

\begin{proof}
 For simplicity we give the proof for  $N> 2$ (See Remark at the end).\newline
We attempt to estimate rates $R_{\epsilon}(c_{N-1}|c_N)$ and $R_{\epsilon}(c_N|c_{N-1})$. Note that 
\begin{equation*}
\max \Big[R_\epsilon(c_N|c_{N-1}), R_\epsilon(c_{N-1}|c_N)\; \Big]= 
\frac{|{V_c}^{(N)}-{V_r}^{(N)}|}{\epsilon},\;\;\;
\min \Big[R_\epsilon(c_N|c_{N-1}), R_\epsilon(c_{N-1}|c_N)\; \Big]= 0.
\end{equation*}
So $|{V_c}^{(N)}-{V_r}^{(N)}|$ (Cf \eqref{definition_V's}) is the main term that needs to be estimated. To this end, set 
\begin{align}{\label{def_tau_N}}
{\tau_{1}}^{(N)}= \sum\limits_{\substack{\mu_n,\mu_j, i \\ i \neq N}} |A^{N}(\mu_j|\mu_n)|Q^{\mu_j}(i|N)g(N,\mu_n);\;\;
{\tau_{2}}^{(N)}= \sum\limits_{\substack{\mu_n,\mu_j, i \\ i \neq N}} |A^{i}(\mu_j|\mu_n)|Q^{\mu_j}(N|i)g(i,\mu_n)
\end{align}
Here $g$ is as in \eqref{stat_measure_for_multiple_main_states}. The purpose of \eqref{def_tau_N} is clear from the following formulas
$$\begin{array}{cl}
{V_r}^{(N)}= -\sum\limits_{\mu_n,i,\mu_j}R_{\epsilon}((i,\mu_j)|(N,\mu_n))\times g(N,\mu_n)
\;= -\sum\limits_{\mu_n,\mu_j} R_{\epsilon}((N,\mu_j)|(N,\mu_n))g(N,\mu_n)- {\tau_{1}}^{(N)}.
\end{array}$$
\begin{eqnarray*}
{V_c}^{(N)}=-\sum\limits_{\mu_n,i,\mu_j}R_{\epsilon}((N,\mu_j)|(i,\mu_n))\times g(i,\mu_n)
=-\sum\limits_{\mu_n,\mu_j} R_{\epsilon}((N,\mu_j)|(N,\mu_n))g(N,\mu_n)- {\tau_{2}}^{(N)}.
\end{eqnarray*}
The above equations follow from Eqn (\ref{definition_V's}), dissipativity of $R_{\epsilon}$ and the fact that $g$ is 
a stationary measure for $R_{\epsilon}$.

It follows that $|{V_r}^{N}-{V_c}^{N}|= |{\tau_{1}}^{(N)}-{\tau_{2}}^{(N)}|$. Next we reduce ${\tau_{1}}^{(N)}$ and ${\tau_{2}}^{(N)}$ to more comparable expressions.

In the argument below, we use (i) dissipativity of $Q^{\mu_j}$, (ii)  $|n-j| \le 1 \; \text{implies} \; m^{\mu_n}(N)= m^{\mu_j}(N)(1+o(\frac{1}{\sqrt{M}}))$,  and (iii) the fact that $\nu^{N}$ is a stationary measure for  $A^{N}$:
\begin{align*}
{\tau_{1}}^{(N)} &=   \sum\limits_{\substack{\mu_n,\mu_j, i \\ i \neq N}
} |A^{N}(\mu_j|\mu_n)|Q^{\mu_j}(i|N)g(N,\mu_n)
=  \sum\limits_{\mu_n,\mu_j} |A^{N}(\mu_j|\mu_n)||Q^{\mu_j}(N|N)|g(N,\mu_n)\\ 
 &= \sum\limits_{\mu_j} |Q^{\mu_j}(N|N)|m^{\mu_j}(N)\sum\limits_{\mu_n}|A^{N}(\mu_j|\mu_n)|\nu^{N}(\mu_n)(1+o(\frac{1}{\sqrt{M}}))\\
 &= 2 \sum\limits_{\mu_j} |Q^{\mu_j}(N|N)|m^{\mu_j}(N)|A^{N}(\mu_j|\mu_j)|\nu^{N}(\mu_j)(1+o(\frac{1}{\sqrt{M}}))\\
 &=  2 \sum\limits_{\substack{\mu_j, i \\ i \neq N} } Q^{\mu_j}(N|i)m^{\mu_j}(i)|A^{N}(\mu_j|\mu_j)|\nu^{N}(\mu_j)(1+o(\frac{1}{\sqrt{M}})).
\end{align*}
Similarly,
\begin{align*}
{\tau_{2}}^{(N)}&= \sum\limits_{\substack{\mu_n,\mu_j, i \\ i \neq N}
} |A^{i}(\mu_j|\mu_n)|Q^{\mu_j}(N|i)g(i,\mu_n)\\
&= \sum\limits_{\substack{\mu_n,\mu_j, i \\ i \neq N}} Q^{\mu_j}(N|i)m^{\mu_j}(i)\nu^{i}(\mu_n)|A^{i}(\mu_j|\mu_n)|(1+o(\frac{1}{\sqrt{M}}))\\
&=   2 \sum\limits_{\substack{\mu_j, i \\ i \neq N} } Q^{\mu_j}(N|i)m^{\mu_j}(i)|A^{i}(\mu_j|\mu_j)|\nu^{i}(\mu_j)(1+o(\frac{1}{\sqrt{M}})).
\end{align*}

The rest of the proof will be devoted to examining $|{\tau_{1}}^{(N)}-{\tau_{2}}^{(N)}|$. Observe that
\begin{equation}\label{Interaction_c_N_C_N_ref_point_1}\begin{array}{l}
 |{\tau_{1}}^{(N)}-{\tau_{2}}^{(N)}|
 = \left|2 \sum\limits_{\substack{\mu_j, i \\ i \neq N} } Q^{\mu_j}(N|i)m^{\mu_j}(i)
\left[|A^{N}(\mu_j|\mu_j)|\nu^{N}(\mu_j)- |A^{i}(\mu_j|\mu_j)|\nu^{i}(\mu_j)\right]
(1+o(\diy\frac{1}{\sqrt{M}}))\right|
\end{array}\end{equation}
Using the fact that  the only non-zero contributions comes from $i=N-1$, the last 
expression equals
\begin{align}\label{Interaction_c_N_C_N_ref_point_1_1}
\begin{split}
&\left|\;2 \sum\limits_{\mu_j} Q^{\mu_j}(N|N-1)m^{\mu_j}(N-1)\;
(|A^{N}(\mu_j|\mu_j)|\nu^{N}(\mu_j)-\right.
\\  & \qquad \qquad|A^{N-1}(\mu_j|\mu_j) |\;\nu^{N-1}(\mu_j))\;
\left.(1+o(\frac{1}{\sqrt{M}}))\;\right|.\end{split}\end{align}
 By virtue of (\ref{recurrence_2}), (\ref{recur1}), coincides with
\begin{equation}\begin{array}{l}{\label{difference_of_taus}}
 2\Bigg| \;  \sum\limits_{j=0}^{M-1} \mu_j g(N,\mu_j)\left( N+1- (\frac{N}{N-1})^{M+2}(\frac{N-1}{N})^{j}(N-2)\right)\;\\
 \qquad\times\left(1+o(\frac{1}{\sqrt{M}})\right) 
 +\mu_M g(N,\mu_M)(1-\frac{N(N-2)}{(N-1)^2})\;\left(1+o(\frac{1}{\sqrt{M}})\right)\;\Bigg|\\
\quad =2\Bigg| \;  \sum\limits_{j=0}^{M-1} \mu_j g(N,\mu_j)( N+1- (\frac{N}{N-1})^{M+2}(\frac{N-1}{N})^{j}(N-2)  )\;\\
\qquad\times\left(1+o(\frac{1}{\sqrt{M}})\right)  
+\mu_M g(N,\mu_{M-1-M^{\frac{3}{4}}})N^{M^{\frac{3}{4}}+1}\frac{1}{(N-1)^2} \;
\left(1+o(\frac{1}{M^{\frac{1}{9}}})\right)\;\Bigg|.
\end{array}\end{equation}

The definitions below break the sum in \eqref{difference_of_taus} into two parts, which will be considered separately:
\begin{eqnarray*}
 U^{1}= \sum\limits_{j= M-1-M^{\frac{3}{4}}}^{M-1} \mu_j g(N,\mu_j)( N+1- (\frac{N}{N-1})^{M+2}(\frac{N-1}{N})^{j}(N-2)  ),\\U^{2}= \sum\limits_{j= 0}^{M-1-M^{\frac{3}{4}}} \mu_j g(N,\mu_j)( N+1- (\frac{N}{N-1})^{M+2}(\frac{N-1}{N})^{j}(N-2)  ).
\end{eqnarray*}
Formally, $ \sum\limits_{j=0}^{M-1} \mu_j g(N,\mu_j) (N+1- (\frac{N}{N-1})^{M+2}(\frac{N-1}{N})^{j}(N-2)  )= U^{1} + U^{2} $.

Using the formula for geometric progressions and the fact that 
$g(i,\mu_{j+l})=i^l g(i,\mu_{j})(1+o(\frac{1}{M^{\frac{1}{9}}}))$ when 
$0<l< M^{\frac{3}{4}}$, we obtain:
\begin{align}\label{Estimates_for_U_1}
\begin{split}
 U^{1} \;
&=\sum\limits_{j=M-1-M^{\frac{3}{4}}}^{M-1} \mu_j g(N,\mu_j)\;( (N+1)- (\frac{N}{N-1})^{M+2}(\frac{N-1}{N})^{j}(N-2) )\\
&=\sum\limits_{j=0}^{M^{\frac{3}{4}}} \mu_M g(N,\mu_{M-1-M^{\frac{3}{4}}})N^j\;( (N+1)- (\frac{N}{N-1})^{M^{\frac{3}{4}}+3}(\frac{N-1}{N})^{j}(N-2) )\;(1+o(\frac{1}{M^{\frac{1}{9}}}))\\
&= \mu_M g(N,\mu_{M-1-M^{\frac{3}{4}}})\;( \frac{N+1}{N-1}N^{M^{\frac{3}{4}}+1}- (\frac{N}{N-1})^{M^{\frac{3}{4}}+3}(N-1)^{M^{\frac{3}{4}}+1})\;(1+o(\frac{1}{M^{\frac{1}{9}}}))\\
&=\mu_M g(N,\mu_{M-1-M^{\frac{3}{4}}})N^{M^{\frac{3}{4}}+1}\;( \frac{N+1}{N-1}- (\frac{N}{N-1})^{2})\;(1+o(\frac{1}{M^{\frac{1}{9}}}))\\
&=- \mu_M g(N,\mu_{M-1-M^{\frac{3}{4}}})N^{M^{\frac{3}{4}}+1}\frac{1}{(N-1)^2} \;(1+o(\frac{1}{M^{\frac{1}{9}}})).
\end{split}
\end{align}
In what follows we use the fact that:
\begin{equation*}
e^{-C M^{\frac{1}{8}}}i^l g(i,\mu_{j})(1+o(\frac{1}{M^{\frac{1}{9}}}))\le g(i,\mu_{j+l})\le e^{C M^{\frac{1}{8}}}i^l g(i,\mu_{j})(1+o(\frac{1}{M^{\frac{1}{9}}})), 
\end{equation*}
where $C$ is a positive constant, depending on $N$. This yields

\begin{align}\label{Estimates_for_U_2}
\begin{split}
| U^{2} |\;
&=-\sum\limits_{j=0}^{M-1-M^{\frac{3}{4}}} \mu_j g(N,\mu_j)\;( (N+1)- (\frac{N}{N-1})^{M+2}(\frac{N-1}{N})^{j}(N-2) )\\
&\le -\sum\limits_{j=0}^{ M-1-M^{\frac{3}{4}} } e^{CM^{\frac{1}{8}}}\mu_M g(N,\mu_0)N^j\;( (N+1)- (\frac{N}{N-1})^{M+2}(\frac{N-1}{N})^{j}(N-2) )\;(1+o(\frac{1}{M^{\frac{1}{9}}}))\\
&= -e^{CM^{\frac{1}{8}}}\mu_M g(N,\mu_0)\;( \frac{N+1}{N-1}N^{M-M^{\frac{3}{4}}}- (\frac{N}{N-1})^{M+2}(N-1)^{M-M^{\frac{3}{4}}})\;(1+o(\frac{1}{M^{\frac{1}{9}}}))\\
&=e^{CM^{\frac{1}{8}}}\mu_M g(N,\mu_0)N^{M-M^{\frac{3}{4}}}\;( \frac{N+1}{N-1}- (\frac{N}{N-1})^{M^{\frac{3}{4}}+2})\;(1+o(\frac{1}{M^{\frac{1}{9}}}))
\\
&\le e^{2CM^{\frac{1}{8}}} \mu_M g(N,\mu_{M-1-M^{\frac{3}{4}}})( \frac{N+1}{N-1}- (\frac{N}{N-1})^{M^{\frac{3}{4}}+2})\;(1+o(\frac{1}{M^{\frac{1}{9}}})).
\end{split}
\end{align}
Eqns (\ref{Estimates_for_U_1}) and (\ref{Estimates_for_U_2}) show that $U^{1}+ U^{2}= U^{1}(1+ o(\frac{1}{M^{\frac{1}{9}}}))$.

By using \eqref{difference_of_taus} and \eqref{Estimates_for_U_1}, we get $$ |{\tau_{1}}^{(N)}-{\tau_{2}}^{(N)}|= o(\frac{1}{M^{\frac{1}{9}}}) \times \mu_M g(N,\mu_M)\times \frac{1}{(N-1)^2}(1+o(\frac{1}{M^{\frac{1}{9}}})).$$
Also $|R_\epsilon(c_N|c_{N})| \ge R_\epsilon((N,\mu _M)|c_{N})$. A direct computation shows that $R_\epsilon((N,\mu _M)|c_{N})=  \frac{\mu_M g(N,\mu_M)}{\epsilon(N-1)^2}$ $ (1+o(\frac{1}{M^{\frac{1}{9}}})),\; R_\epsilon(c_{N}|(N,\mu _M))=0, \; \text{and}\; R_\epsilon((N,\mu _M)|(N,\mu _M))= (1+o(\frac{1}{M^{\frac{1}{9}}}))\times -3 \mu_M$.
This completes the proof of  Theorem \ref{first_theorem_last_section}.
\end{proof}

\begin{remark}\label{remark_4_2_1}
For $N=2$, the above proof does not work, but a direct computation will yield the same result.
\end{remark}

\begin{thm}\label{second_theorem_last_section}
For $N \ge N' \ge 2$, given $\max\Big[R_\epsilon(c_N'|c_{N'+1}), 
R_\epsilon(c_{N'+1}|c_{N'})\; \Big]=o(\frac{1}{M^{\frac{1}{9}}})\;
|R_\epsilon(c_{N'}|c_{N'})| $, we have 
$$\max \Big[\;R_\epsilon(c_{N'}|c_{N'-1}), R_\epsilon(c_{N'-1}|c_{N'})\; \Big] 
= o(\frac{1}{M^{\frac{1}{9}}})\;\times
\min\Big[\;|R_\epsilon(c_N'|c_{N'})|,\; |R_\epsilon(c_{N'-1}|c_{N'-1})|\;\Big]$$. 
\end{thm}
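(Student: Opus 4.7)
The plan is to mimic the proof of Theorem \ref{first_theorem_last_section} but with the extra complication that $c_{N'}$ now interacts with both neighbors $c_{N'+1}$ and $c_{N'-1}$. Using \eqref{definition_V's_genj}, I would observe that $\max[R_\epsilon(c_{N'}|c_{N'-1}), R_\epsilon(c_{N'-1}|c_{N'})] = |V_r^{(N')}-V_c^{(N')}|/\epsilon$ while $\min[\cdots]=0$. By the same device used in the base case (dissipativity of $R_\epsilon$ and stationarity of $g$), I would rewrite
\begin{align*}
V_r^{(N')} &= -\sum_{\mu_n,\mu_j} R_\epsilon((N',\mu_j)|(N',\mu_n))g(N',\mu_n) - \tau_1^{(N')} + \epsilon R_\epsilon(c_{N'}|c_{N'+1}),\\
V_c^{(N')} &= -\sum_{\mu_n,\mu_j} R_\epsilon((N',\mu_j)|(N',\mu_n))g(N',\mu_n) - \tau_2^{(N')} + \epsilon R_\epsilon(c_{N'+1}|c_{N'}),
\end{align*}
with $\tau_1^{(N')}, \tau_2^{(N')}$ defined analogously to \eqref{def_tau_N} (replacing $N$ by $N'$). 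Then $|V_r^{(N')}-V_c^{(N')}|\le |\tau_1^{(N')}-\tau_2^{(N')}| + \epsilon\bigl(R_\epsilon(c_{N'}|c_{N'+1})+R_\epsilon(c_{N'+1}|c_{N'})\bigr)$, and the second term is already $o(M^{-1/9})|R_\epsilon(c_{N'}|c_{N'})|$ by hypothesis, hence harmless for the conclusion.

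Next I would reduce $\tau_1^{(N')}$ and $\tau_2^{(N')}$ exactly as was done for $\tau_1^{(N)}, \tau_2^{(N)}$: apply the dissipativity of $Q^{\mu_j}$, the estimate $m^{\mu_n}(N')=m^{\mu_j}(N')(1+o(M^{-1/2}))$ for $|n-j|\le 1$, and stationarity of $\nu^{N'}$ (respectively $\nu^i$) under $A^{N'}$ (respectively $A^i$). This yields
\begin{equation*}
|\tau_1^{(N')}-\tau_2^{(N')}| = \Bigl|\,2\sum_{\substack{\mu_j,i\\ i\ne N'}} Q^{\mu_j}(N'|i)\,m^{\mu_j}(i)\bigl[|A^{N'}(\mu_j|\mu_j)|\nu^{N'}(\mu_j)-|A^i(\mu_j|\mu_j)|\nu^i(\mu_j)\bigr](1+o(M^{-1/2}))\Bigr|.
\end{equation*}
The only nonzero contributions come from $i=N'\pm 1$. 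For each such $i$ I would use the recurrences \eqref{recurrence_2}, \eqref{recur1}, \eqref{recur2} to express the bracket in terms of ratios like $(N'/(N'\pm 1))^{M+c}(\cdots)^j$, producing an expression of the same shape as \eqref{difference_of_taus}.

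Then I would split each sum over $j$ into the tail region $j\in[M-1-M^{3/4},M-1]$ and the bulk $j\in[0,M-1-M^{3/4}]$, as in the definitions of $U^1$ and $U^2$. The tail is evaluated by the geometric-series trick used for \eqref{Estimates_for_U_1}, yielding an explicit expression proportional to $\mu_M g(N',\mu_{M-1-M^{3/4}})$ times a constant depending on $N'$. The bulk is bounded as in \eqref{Estimates_for_U_2} using the two-sided bound $e^{-CM^{1/8}}i^l g(i,\mu_j)\le g(i,\mu_{j+l})\le e^{CM^{1/8}}i^l g(i,\mu_j)$, which shows the bulk is smaller than the tail by a factor $o(M^{-1/9})$. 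The remaining factor of the tail contribution is then compared to $|R_\epsilon(c_{N'}|c_{N'})|$ and $|R_\epsilon(c_{N'-1}|c_{N'-1})|$, each of which is bounded below by one of the incoming rates $R_\epsilon((N',\mu_M)|c_{N'})$ or $R_\epsilon((N'-1,\mu_M)|c_{N'-1})$ (computed directly as in the end of the proof of Theorem \ref{first_theorem_last_section}), giving the desired bound against the minimum of the two diagonals.

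The main obstacle I anticipate is the asymmetry between the $i=N'+1$ and $i=N'-1$ contributions: unlike the $N'=N$ case, we cannot drop one of them, and the recurrence ratios that appear differ (one involves $(N'/(N'+1))^{M+2-j}$, the other $((N'-1)/N')^{M+1-j}$). Verifying that after the tail/bulk split both contributions combine into a quantity of the form $o(M^{-1/9})\times[\text{explicit tail term}]$, and that this explicit tail term is dominated by the minimum of the two diagonals, is the delicate book-keeping step; the base case $N'=2$ will again require a separate direct check, as in Remark \ref{remark_4_2_1}.
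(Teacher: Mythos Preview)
Your proposal is correct and follows exactly the approach the paper itself prescribes: the paper's entire proof of this theorem is the single sentence ``Similar to proof of Theorem \ref{first_theorem_last_section},'' and your outline is precisely the natural adaptation of that argument, including the use of the inductive hypothesis to absorb the extra $c_{N'+1}$--terms and the appearance of contributions from both $i=N'-1$ and $i=N'+1$. The ``delicate book-keeping'' you flag is real but routine, and your plan to handle it (same tail/bulk split, same geometric-series evaluation, separate direct check for $N'=2$) is the intended one.
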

\begin{proof}
Similar to proof of Theorem \ref{first_theorem_last_section}.
\end{proof}
\end{subsection}
\end{section}

\begin{section}{Closing comments}
\begin{enumerate}
\item In Section \ref{construc_multiple_maintainance_states}, for any $r>0$ (possibly depending on the parameter $M$), if we replace the family of operators $A^{i}$ by ${A^{i}}_{r}$, defined as
$$
{A^{i}}_{r} (\mu'| \mu) = r \times A^{i} (\mu'| \mu),\; \forall i \in X,\mu',\mu \in Z.
$$
We then carry out the construction by using ${A^{i}}_{r}$ instead of $A^{i}$, all the ensuing 
results would still hold true. Multiplying everything by $r$ represents a universal and uniform 
speeding up of the environment Markov chain. One might think that the asymptotic results 
emerging as $M \to \infty$ only hold because the environment process is being equipped with a 
larger number of possible states between $\frac{1}{2}$ and $\frac{3}{2}$, without being 
given the  adequate speed to run through them, to compensate for this it needs $c$ to make 
long jumps. However, this view is dispelled by the argument above.

\item The use of multiple maintenance states suggests an interesting direction. As was noted  
in Section \ref{general_construction}, the transition state $c$ has been introduced 
because we could \textbf{not} choose $\tau_{\epsilon} (x,z) > 0$ such that 
\begin{align}{\label{closing_remarks_1}}
\begin{split}
&\sum\limits_{(x,z) \in X \times Z} (-\tau_{\epsilon} (x_i,z_j))^{\mathbbm{1}_{z_j}(z) 
\mathbbm{1}_{x_i}(x)}
\left|A^{x}(z_j|z)Q^{z}(x_i|x)\right| g(x,z) = 0, \\
&\sum\limits_{(x',z') \in X \times Z} (-\tau_{\epsilon} (x_i,z_j))^{\mathbbm{1}_{z_j}(z') 
\mathbbm{1}_{x_i}(x')} 
\left|A^{x_i}(z'|z_j)Q^{z_j}(x'|x_i) \right|= 0.
\end{split}
\end{align}
Instead we selected $\tau_{\epsilon} (x,z) > 0$ to satisfy (\ref{choice1}) and (\ref{choice2}).
This forced us into using $c$ to accommodate (\textbf{A}) and (\textbf{B}) of Section 2.1. Now, assume that 
the natural space $X \times Z$ can be partitioned into parts $\{B_{h}\}_{h=1}^{H}$ so  
that there exists $\tau_{\epsilon} (x,z) > 0$, satisfying \ $\forall$ \ $B_{h}$, the property 
that
\begin{align}\label{closing_remarks_2}
\begin{split}
&\sum\limits_{(x_i,z_j) \in B_{h}} \sum\limits_{(x,z) \in X \times Z} (-\tau_{\epsilon} (x_i,z_j))^{\mathbbm{1}_{z_j}(z) \mathbbm{1}_{x_i}(x)}\left| A^{x}(z_j|z)Q^{z}(x_i|x)\right| g(x,z) = 0 \nonumber\\
&\sum\limits_{(x_i,z_j) \in B_{h}} \sum\limits_{(x',z') \in X \times Z} (-\tau_{\epsilon} (x_i,z_j))^{\mathbbm{1}_{z_j}(z') \mathbbm{1}_{x_i}(x')}\left| A^{x_i}(z'|z_j)Q^{z_j}(x'|x_i)\right| = 0.
\end{split}
\end{align}
In this case we can use a collection of maintenance sets $\{c_{h}\}_{h=1}^{H}$ such that 
each $c_h$ interacts only with the partition element $B_h$, and there is no interaction 
between $c_{h_1}$ and $c_{h_2}$, $\forall$ $1 \le c_{h_1}\; <\;c_{h_2} \le H$. If we can 
choose a partition where the size of the parts is small, we get a satisfying construction. 
Whenever a maintenance state $c_h$ is attained, the `repair' is not drastic, as the 
process returns to a state close to the state it occupied immediately before it jumped to  $c_h$  (in the same $B_h$).
 
With this in mind, it could be considered that a pair of matrix families $\{A^x\},\{Q^z\}$ 
for which such a partition of the natural space can be realized, represents a  
\enquote{nice} form of interaction between the environment and basic states. In fact, these 
constructions can be generalized to the situation where $X$ and $Z$ are compact metric spaces.  
This will be discussed in forthcoming papers. Here the notion of partition into 
\enquote{small} parts can be formally understood as the requirement that each element of the partition has a 
small diameter.
\end{enumerate}

\end{section}
\begin{section}{Acknowledgment}
I would like to express my deepest gratitude to Prof. Manfred Denker, Prof.  Yuri Suhov, Dr. Anna Levina and Prof. Guodong Pang for their insight and guidance.
\end{section}
\bibliographystyle{plain}
\bibliography{bibf}
\end{document}